\begin{document}
\newcommand{\revision}[1]{{\color{purple} #1}}
\newcommand{\mynote}[1]{{\color{blue} #1}}
\title{Circular Economy Design through System Dynamics Modeling
}

\author{Federico Zocco\orcidID{0000-0002-6631-7081} \and
Monica Malvezzi\orcidID{0000-0002-2158-5920}}
\authorrunning{F. Zocco and M. Malvezzi}
\institute{Department of Information Engineering and Mathematics, University of Siena, Italy\\
\email{federico.zocco.fz@gmail.com}\\
\email{monica.malvezzi@unisi.it}
}
\maketitle              %
\begin{abstract}
Nowadays, there is an increasing concern about the unsustainability of the take-make-dispose paradigm upon which traditional production and consumption systems are built. 
The concept of circular economy is gaining attention as a potential solution, but it is an emerging field still lacking analytical and methodological dynamics approaches.
Hence, in this paper, firstly we 
propose a quantitative definition of
circularity, namely, $\lambda$, predicated on compartmental dynamical thermodynamics, and then, we use it to state the optimization of the circularity $\lambda$ as an arg-max problem.  By leveraging the derivation of Lagrange's equations of motion from the first law of thermodynamics, we 
apply the   
analytical
mechanics approaches to
 circularity. Three examples illustrate the calculation of $\lambda$ for different settings of two compartmental networks. 
In particular, hypothesizing a repair stage followed by product reuse we highlight the memory property of $\lambda$. Finally, robotic repair is proposed within this framework to pave the way for circular robotics as a new area of research in which the performance of a robotic system is measured against $\lambda$.   
\keywords{SDG12  \and Circular systems \and Thermodynamical material networks.}
\end{abstract}

\section{Introduction}
Raw materials and minerals are the constituents of any product, from the tiny transistor to the large football stadium. With the traditional (i.e., linear) economy, there are open issues regarding the criticality of several raw materials \cite{CRM-EU} and regarding the management of end-of-life products \cite{EU-waste}. The concept of circularity at the core of a circular economy (CE) is gaining importance to address both material supply uncertainties and waste generation \cite{EllenMacArthurFound}. The practical strategies to increase circularity at local and global scales are the so-called ``Rs'' such as reduce, reuse, repair, refurbish, and re-manufacture \cite{potting2017circular}. 
As a research field mainly rooted in business and environmental programs, CE currently lacks 
model-based, dynamics foundations.
Hence, this paper contributes to exploiting system dynamics in circularity to provide mathematical methods and quantitative evaluations that can strengthen and clarify the knowledge of CE.

\textbf{Related Work:}      
This brief paper builds on the systemic framework recently proposed by the authors \cite{zocco2024unification}. The framework is complementary to the state-of-the-art methodology of material flow analysis (MFA) \cite{cullen2022material,wang2024bayesian} because leveraging ordinary differential equations, which underpin dynamical systems and control theories, rather than data analysis. In addition, this paper provides a measure of circularity similar to previous works \cite{basile2024measuring}, but to the best of our knowledge, it is the first to do so through the lenses of analytical mechanics.

\section{Preliminaries}
Consider the modeling approach of the Rankine cycle, which consists of applying the mass balance and the thermodynamic laws to each component of the cycle one at a time \cite{moran2010fundamentals}. Each component of the cycle is contained inside a \emph{control surface}. Hence, each component of the cycle is a \emph{thermodynamic compartment} contained inside a control surface. Given the generality of the mass balance and the laws of thermodynamics \cite{haddad2017thermodynamics,haddad2019dynamical}, the modeling approach of the Rankine cycle can be generalized as follows.

Let $c^k_{i,j}$ be the $k$-th thermodynamic compartment through which the material moves from compartment $i$ to compartment $j$. Thus, the Rankine cycle can be depicted as in Fig. \ref{fig:RankineCycle}, where the control surfaces are indicated with the dashed lines on the left-hand side and with the rectangles on the right-hand side, and where $\beta$ is the working fluid. The position of an infinitesimal mass element $\beta$ is tracked by a geometrical vector with respect to the origin $O$ of a reference frame. 
\begin{figure}
    \vspace{-0.5cm}
    \centering
    \includegraphics[width=0.75\textwidth]{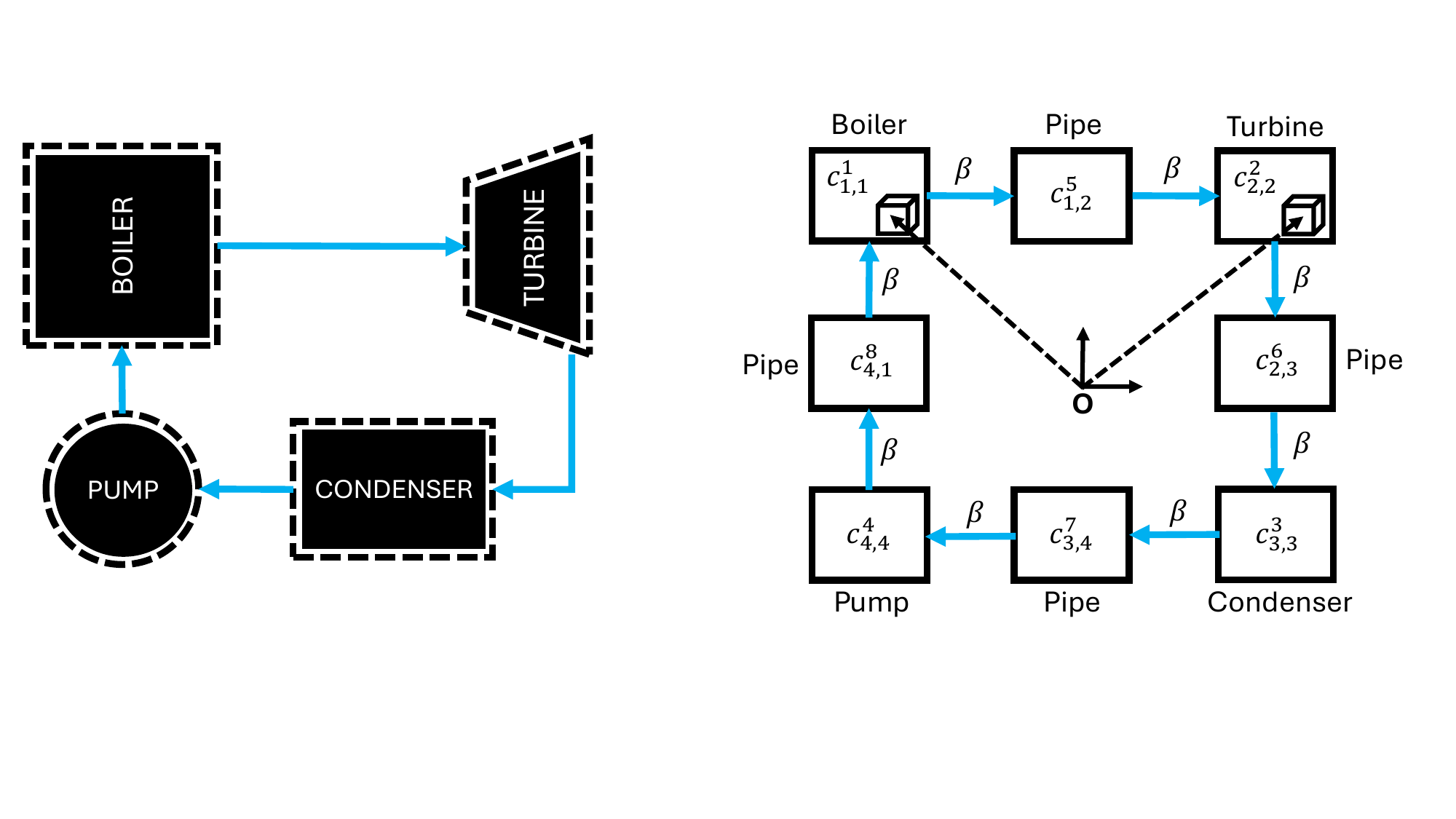}
    \caption{Classic (left) and generalized (right) representations of the Rankine cycle. Figure adapted from \cite{zocco2024unification}.}
    \label{fig:RankineCycle}
    \vspace{-0.5cm}
\end{figure}
Note now that the compartmental representation on the right-hand side of Fig. \ref{fig:RankineCycle} is a network of compartments. Hence, it can be compactly represented by a graph, detailed in the following.
\begin{definition}[\hspace{1sp}\cite{bondy1976graph}]
A directed graph $D$ or \emph{digraph} is a graph identified by a set of $n_v$ \emph{nodes} $\{v_1, v_2, \dots, v_{n_v}\}$ and a set of $n_a$ \emph{arcs} $\{a_1, a_2, \dots, a_{n_a}\}$ that connect the nodes. A digraph $D$ in which each node or arc is associated with a \emph{weight} is a \emph{weighted digraph}. 
\end{definition}
Using the compartmental perspective and the notion of digraph, we now introduce the definition of thermodynamical material network (TMN).
\begin{definition}[\cite{zocco2023thermodynamical}]\label{def:TMN}
A \emph{thermodynamical material network} (TMN) is a set $\mathcal{N}$ of connected thermodynamic compartments, that is, 
\begin{equation}\label{def:TMNset}
\begin{gathered}
\mathcal{N} = \left\{c^1_{1,1}, \dots, c^{k_v}_{k_v,k_v}, \dots, c^{n_v}_{n_v,n_v}, \right. \\ 
\left. c^{n_v+1}_{i_{n_v+1},j_{n_v+1}}, \dots, c^{n_v+k_a}_{i_{n_v+k_a},j_{n_v+k_a}}, \dots, c^{n_c}_{i_{n_c},j_{n_c}}\right\}, 
\end{gathered}
\end{equation}
which transport, store, use, and transform a target material. Each compartment is indicated by a \emph{control surface} and it is modeled using \emph{dynamical systems} derived from a mass balance and/or at least one of the laws of \emph{thermodynamics} \cite{haddad2019dynamical}.
\end{definition}

Specifically, $\mathcal{N} = \mathcal{R} \cup \mathcal{T}$, where $\mathcal{R} \subseteq \mathcal{N}$ is the subset of compartments $c^k_{i,j}$ that \emph{store}, \emph{transform}, or \emph{use} the target material, while $\mathcal{T} \subset \mathcal{N}$ is the subset of compartments $c^k_{i,j}$ that \emph{move} the target material between the compartments belonging to $\mathcal{R} \subseteq \mathcal{N}$. A net $\mathcal{N}$ is associated with its weighted \emph{mass-flow digraph} $M(\mathcal{N})$, which is a weighted digraph whose nodes are the compartments $c^k_{i,j} \in \mathcal{R}$ and whose arcs are the compartments $c^k_{i,j} \in \mathcal{T}$. For node-compartments $c^k_{i,j} \in \mathcal{R}$ it holds that $i = j = k$, whereas for arc-compartments $c^k_{i,j} \in \mathcal{T}$ it holds that $i \neq j$ because an arc moves the material from the node-compartment $c^i_{i,i}$ to the node-compartment $c^j_{j,j}$. The orientation of an arc is given by the direction of the material flow. The superscript $k$ is the identifier of each compartment. The weight assigned to a node-compartment is the mass stock $m_k$ within the corresponding compartment, whereas the weight assigned to an arc-compartment is the mass flow rate $\dot{m}_{i,j}$ from the node-compartment $c^i_{i,i}$ to the node-compartment $c^j_{j,j}$. The superscripts $k_v$ and $k_a$ in (\ref{def:TMNset}) are the $k$-th node and the $k$-th arc, respectively, while $n_c$ and $n_v$ are the total number of compartments and nodes, respectively. Since $n_a$ is the total number of arcs, it holds that $n_c = n_v + n_a$ \cite{zocco2023thermodynamical,zocco2022circularity}.

So far, we first recalled the well-known Rankine cycle, and then, we generalized its modeling approach as a sequence of thermodynamic compartments $\mathcal{N}$ (\ref{def:TMNset}) connected through the flow of a material $\beta$ of interest. Subsequently, we introduced the \emph{mass-flow digraph} of $\mathcal{N}$, which provides a compact representation of $\mathcal{N}$.

Now, since our goal is to design \emph{circular} networks $\mathcal{N}$, we need to define a measure of circularity. We will do so after the following definition.
\begin{definition}[Unsustainable mass or flow]\label{def:unsMassFlow}
A mass or flow is \emph{unsustainable} if either it exits a nonrenewable reservoir or it enters a landfill, an incinerator, or the natural environment as a pollutant.
\end{definition}
The locations mentioned in Definition \ref{def:unsMassFlow}, i.e., reservoirs, landfills, incinerators, and the environment, are thermodynamic compartments $c^k_{i,j} \in \mathcal{N}$. We can now define the circularity of $\mathcal{N}$.  
\begin{definition}[Circularity]
The circularity $\lambda(\mathcal{N}) \in (-\infty, 0]$ is defined as
\begin{equation}\label{eq:circularity}
\lambda(\mathcal{N}) = - \left(m_{\textup{u},\textup{b}} + \dot{m}_{\textup{u},\textup{c}}\Delta\right).
\end{equation}
\end{definition}
In (\ref{eq:circularity}), $m_{\textup{u},\textup{b}}$ is the total unsustainable mass transported in batches (e.g., solids transported on trucks), $\dot{m}_{\textup{u},\textup{c}}$ is the total unsustainable flow transported continuously (e.g., fluids transported through pipes), and $\Delta > 0$ is a constant interval of time introduced as a multiplying factor in order to convert the flow $\dot{m}_{\textup{u},\textup{c}}$ into a mass, and hence, to make the sum with $m_{\textup{u},\textup{b}}$ physically consistent. The choice of $\Delta$ is arbitrary, but its value must be kept the same for any calculation of $\lambda(\mathcal{N})$ in order to make comparisons. For simplicity and without loss of generality, we assume $\Delta = 1$ s.    

Thus, the design of circular systems $\mathcal{N}$ can be stated as
\begin{equation}
\mathcal{N}^* = \text{arg }\text{max} \quad \lambda(\mathcal{N}),
\end{equation}
which consists of finding the optimal compartmental network $\mathcal{N}$ that maximizes the circularity (\ref{eq:circularity}) or, equivalently, that minimizes the total unsustainable mass and flow. In the next section, we will use analytical mechanics to model the dynamics of the material $\beta$ for a given $\mathcal{N}$. This will enable us to calculate the circularity $\lambda(\mathcal{N})$ (\ref{eq:circularity}), and then, to analyse how the circularity changes with different settings and configurations of $\mathcal{N}$. Before doing so, the following result must be stated. 

Let $T$ be the kinetic energy, let $V$ be the potential energy, and let $L = T - V$ be the Lagrangian function of an element of material $\beta$ and mass $m$. Let $s$ be the coordinate associated with the element path and let $\xi$ be the sum of the nonconservative forces acting on the element. Thus, the Lagrange's equation of motion is given by
\begin{equation}\label{eq:LagrangeEquation}
\frac{\text{d}}{\text{d}t}\left(\frac{\partial L}{\partial \dot{s}}\right) = \frac{\partial L}{\partial s} + \xi. 
\end{equation}
\begin{proposition}\label{prop:LagrFromFirstLaw}
The Lagrange's equation of motion (\ref{eq:LagrangeEquation}) can be derived from the first law of thermodynamics. 
\end{proposition}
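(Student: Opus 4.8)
The plan is to treat the infinitesimal element of material $\beta$ of mass $m$ as a thermodynamic system enclosed by its own control surface, write the first law of thermodynamics as an energy balance for that system, and then show that once the bulk kinetic and potential energies are recognized as contributions to the total energy appearing in the first law, the balance collapses exactly onto (\ref{eq:LagrangeEquation}).

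First I would write the first law in rate form for the element, $\dot{E} = \dot{Q} - \dot{W}$, where $E = U + T + V$ is the total energy (internal plus kinetic plus potential), $\dot{Q}$ is the net heat flow into the element, and $\dot{W}$ is the rate of work done by the element on its surroundings in the engineering sign convention of \cite{moran2010fundamentals}. Because $V$ is already carried inside $E$, the work of the conservative forces does not reappear in $\dot{W}$; the only external work term is the one associated with the nonconservative forces, whose rate of work \emph{on} the element is $\xi\dot{s}$, contributing $-\xi\dot{s}$ to $\dot{W}$. The thermal content is disposed of by the modeling assumption that, as far as the mechanical motion is concerned, the element stores no net internal energy beyond what heat supplies, i.e. $\dot{U} = \dot{Q}$ (equivalently, an adiabatic element with unchanging internal energy). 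With this, the first law reduces to the work--energy relation $\dot{T} + \dot{V} = \xi\dot{s}$.

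Next I would substitute $T = \tfrac{1}{2}m\dot{s}^2$ and $V = V(s)$, giving $\dot{T} = m\dot{s}\,\ddot{s}$ and $\dot{V} = (\partial V/\partial s)\dot{s}$, so that $m\dot{s}\,\ddot{s} + (\partial V/\partial s)\dot{s} = \xi\dot{s}$. Dividing through by $\dot{s}$, and extending the resulting identity to the instants $\dot{s}=0$ by continuity, yields Newton's second law $m\ddot{s} = -\partial V/\partial s + \xi$. Finally I would recast this in Lagrangian form using $L = T - V$: since $\partial L/\partial\dot{s} = \partial T/\partial\dot{s} = m\dot{s}$ and $\partial L/\partial s = -\partial V/\partial s$, one obtains $\frac{\text{d}}{\text{d}t}\!\left(\frac{\partial L}{\partial\dot{s}}\right) = m\ddot{s} = \frac{\partial L}{\partial s} + \xi$, which is precisely (\ref{eq:LagrangeEquation}). (For a single coordinate $s$ this scalar division suffices; in a multi-degree-of-freedom extension one would instead invoke virtual displacements, but that is outside the present statement.)

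I expect the delicate step to be the bookkeeping that separates the purely mechanical content of the first law from its thermal content: one must justify that the kinetic and potential energies of the bulk motion legitimately belong to the energy $E$ that the first law balances, and that the internal-energy and heat terms either cancel or are inert with respect to the equation of motion. The division by $\dot{s}$ is a minor technicality removed by continuity, and the remaining algebra is routine; hence the conceptual weight of the proof rests entirely on correctly identifying which energy terms enter the thermodynamic balance and which force does nonconservative work on the element.
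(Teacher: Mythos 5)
Your argument is sound for the setting in which the proposition is actually used, but note that the paper itself contains no in-text derivation: its ``proof'' is a pointer to Equations (22)--(27) of the cited reference \cite{zocco2023thermodynamical}, which carry out precisely the kind of specialization of the first law that you reconstruct. Your route --- write the first law in rate form with the bulk kinetic and potential energies included in the total energy $E=U+T+V$, decouple the thermal content via $\dot U=\dot Q$, reduce to the power balance $\dot T+\dot V=\xi\dot s$, divide by $\dot s$ to recover Newton's law along the path, and then repackage it as $\frac{\text{d}}{\text{d}t}\bigl(\partial L/\partial\dot s\bigr)=\partial L/\partial s+\xi$ --- is therefore essentially a self-contained reconstruction of the cited derivation rather than a genuinely different method, and it is correct for the single path coordinate $s$ with $T=\tfrac12 m\dot s^2$ and $V=V(s)$ used throughout the paper. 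You are also right to locate the conceptual weight in the bookkeeping: the first law alone does not produce the equation of motion; it does so only together with the modeling assumptions that the element is mechanically closed apart from the nonconservative force $\xi$ and that internal energy changes are balanced by heat, so these assumptions deserve to be stated as hypotheses rather than left implicit. Two small caveats: the continuity fix for the division by $\dot s$ only handles isolated zeros of $\dot s$ (over an interval of rest the power balance degenerates to $0=0$ and the force balance cannot be inferred from it), and your argument as written assumes a constant mass coefficient in $T$, whereas the one-degree-of-freedom power-balance argument in fact survives an $s$-dependent coefficient as well; neither caveat affects the proposition as it is invoked in the examples.
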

\begin{proof}
The proof is given in \cite{zocco2023thermodynamical}, Equations (22)-(27).
\end{proof}
From Proposition \ref{prop:LagrFromFirstLaw} it follows that the application of Lagrange's equation (\ref{eq:LagrangeEquation}) to a mechanical system is an application of the first law of thermodynamics, and hence, a mechanical system can, in general, be represented as a thermodynamic compartment $c^k_{i,j} \in \mathcal{N}$.

\section{Analytical Mechanics of Circularity through Examples}
This section covers three examples. Specifically, it begins with a linear configuration of $\mathcal{N}$ (Example 1), and subsequently, it hypothesizes material use reduction (Example 2) and robotic repair (Example 3) to analyse their effect on the circularity $\lambda(\mathcal{N})$ defined in Eq. (\ref{eq:circularity}).

\subsection{Example 1: Linear Case}\label{sub:Example1}
Consider the black nodes and the black arcs in Fig. \ref{fig:Example1}, which give the following \emph{linear} system, namely, $\mathcal{N}_{\text{l}}$:  
\begin{equation}
\mathcal{N}_{\text{l}} = \{c^1_{1,1}, c^2_{2,2}, c^3_{3,3}, c^4_{4,4},c^5_{1,2},c^6_{2,3},c^7_{3,4}\}.
\end{equation}
The representation of $\mathcal{N}_{\text{l}}$ in Fig. \ref{fig:Example1} is referred to as ``geometric digraph'', where the circles are the nodes, the arrows are the arcs, and the geometry of each arc is specified. Specifically, $l_k$ is the length of the path travelled by the $k$-th compartment $c^k_{i,j}$. The configuration $\mathcal{N}_{\text{l}}$ is linear because: first, all the material is extracted from nonrenewable reservoirs; and second, all the material is sent to a landfill after its first use.  
\begin{figure*}
    \vspace{-0.3cm}
    \centering
    \includegraphics[width=0.85\textwidth]{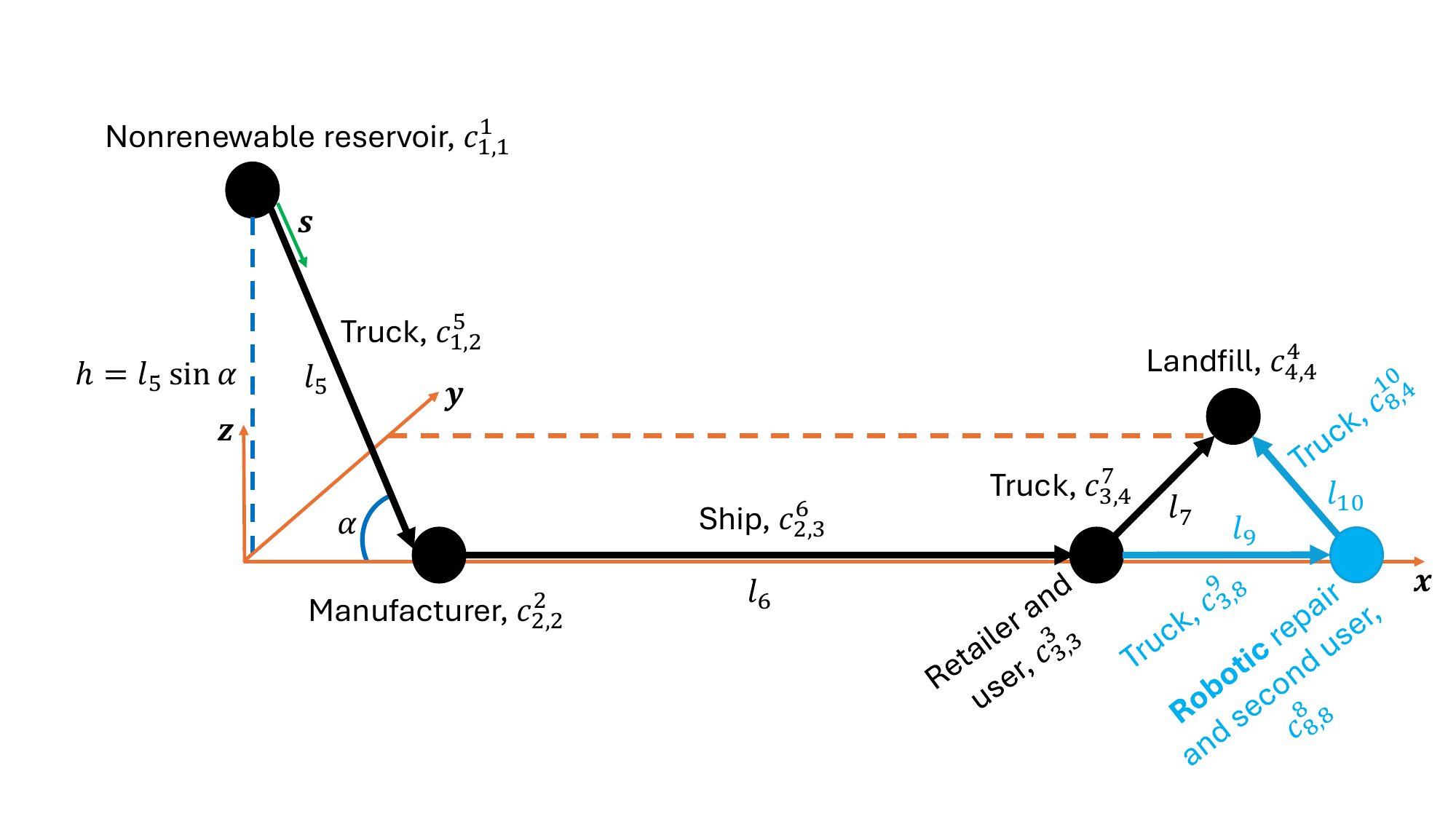}
    \caption{Geometric diagraph of Examples 1, 2, and 3.}
    \label{fig:Example1}
    \vspace{-0.5cm}
\end{figure*}  

The dynamics of the cube of mass $m$ and material $\beta$ can be described by considering one compartment at a time starting from the nonrenewable reservoir ($c^1_{1,1}$). Assume that the cube leaves the reservoir in $t_0 = 0$. Let $F_{\text{t}}$ be the force of the truck ($c^5_{1,2}$), let $F_{\text{f}}$ be the  %
rolling friction and overall motion resistance, let $m_\text{t}$ be the mass of the truck, and let $m_{\text{c}_5} = m_\text{t} + m$ be the total mass of the truck compartment during the transportation of the cube. Hence, the kinetic and the potential energies of the system are
\begin{equation}
T = \frac{1}{2} m_{\text{c}_5} \dot{s}^2(t), \quad 0 \leq s(t) \leq l_5,
\end{equation}
and
\begin{equation}
V = m_{\text{c}_5} g z(t) = m_{\text{c}_5} g (h - s(t) \sin \alpha), \quad 0 \leq s(t) \leq l_5,
\end{equation}
respectively. Thus, with $\xi = F_{\text{t}} - F_{\text{f}}$, the Lagrange's equation (\ref{eq:LagrangeEquation}) yields
\begin{equation}\label{eq:c5-ddots}
m_{\text{c}_5} \ddot{s}(t) = m_{\text{c}_5} g \sin \alpha + F_{\text{t}} - F_{\text{f}}, \quad 0 \leq s(t) \leq l_5,
\end{equation}
while the dynamics with respect to the reference frame is
\begin{equation}
\ddot{x}(t)  = \ddot{s}(t) \cos \alpha, \quad 0 \leq s(t) \leq l_5, 
\end{equation}
\begin{equation}
\ddot{y}(t)  = 0, \quad 0 \leq s(t) \leq l_5, 
\end{equation}
and
\begin{equation}\label{eq:c5-ddotz}
\ddot{z}(t)  = - \ddot{s}(t) \sin \alpha, \quad 0 \leq s(t) \leq l_5. 
\end{equation}
The positions and velocities of the material can be derived by integrating (\ref{eq:c5-ddots})-(\ref{eq:c5-ddotz}).
Assume now that the truck arrives at the manufacturer ($c^2_{2,2}$) at $t_1$ and that a manufactured product made of the cube of material $\beta$ leaves the manufacturer at $t_2$ to be shipped to the retailer and the user ($c^3_{3,3}$), where $t_2 > t_1$. This yields
\begin{equation}
\ddot{s}(t_1) = \dot{s}(t_1) = 0, s(t_1) = l_5, \quad t_1 < t < t_2.
\end{equation}
Then, the dynamics of the shipping compartment $c^6_{2,3}$ is
\begin{equation}
m_{\text{c}_6} \ddot{s}(t) = F_{\text{s}} - F_{\text{d}}, \quad l_5 \leq s(t) \leq l_5 + l_6,
\end{equation}
where $m_\text{s}$ is the mass of the empty ship, $m_{\text{c}_6} = m_\text{s} + m$ is the mass of the loaded ship, and where $F_{\text{s}}$ and $F_{\text{d}}$ are the force of the ship and the drag, respectively. The ship travels $l_6$, with $l_6 >> l_5$. The dynamics with respect to the reference frame for the shipping compartment $c^6_{2,3}$ yields
\begin{equation}
\ddot{x}(t) = \ddot{s}(t), \quad l_5 \leq s(t) \leq l_5 + l_6,  
\end{equation}
while $\ddot{y}(s(t)) = \ddot{z}(s(t)) = 0$ for $l_5 \leq s(t) \leq l_5 + l_6$.
The product made of the cube of material $\beta$ arrives at the retailer at $t_3$ and, after use, it is collected by a garbage truck ($c^7_{3,4}$) at $t_4$. Thus, the law of motion in $c^3_{3,3}$ is
\begin{equation}
\ddot{s}(t_3) = \dot{s}(t_3) = 0, s(t_3) = l_6, \quad t_3 < t < t_4.
\end{equation}
Then, the dynamics of the garbage truck compartment $c^7_{3,4}$ is
\begin{equation}
m_{\text{c}_5} \ddot{s}(t) = F_{\text{t}} - F_{\text{f}}, \quad l_5 + l_6 \leq s(t) \leq l_5 + l_6 + l_7,
\end{equation}
where we assumed a garbage truck with the same mass of the truck $c^5_{1,2}$. The garbage truck travels $l_7$, with $l_6 >> l_5 > l_7$. The dynamics of the material for $c^7_{3,4}$ with respect to the reference frame yields
\begin{equation}
\ddot{y}(t) = \ddot{s}(t), \quad l_5 + l_6 \leq s(t) \leq l_5 + l_6 + l_7,  
\end{equation}
while $\ddot{x}(t) = \ddot{z}(t) = 0$ for $l_5 + l_6 \leq s(t) \leq l_5 + l_6 + l_7$.
Finally, the garbage truck delivers the material to a landfill ($c^4_{4,4}$).

Let us now quantify the circularity (\ref{eq:circularity}) of $\mathcal{N}_{\text{l}}$, namely, $\lambda_{\text{l}}$. Since the material is only transported in batches, we have that $\dot{m}_{\text{u},\text{c}} = 0$; moreover, there is a batch of mass $m$ exiting a nonrenewable reservoir and a batch of mass $m$ entering a landfill; thus, from Definition \ref{def:unsMassFlow} it follows that $m_{\text{u},\text{b}} = 2m$, and hence,
\begin{equation}
\lambda_{\text{l}} = - 2m.
\end{equation}

\subsection{Example 2: Case of Reducing}
Four key practices in a circular economy are reduce, reuse, repair, and recycle \cite{potting2017circular}. Let us see how reducing the material demand in Example \ref{sub:Example1} affects the circularity, which in this case is indicated with $\lambda_{\text{r}_1}$. A possible reduction could result from the choice of the manufacturer ($c^2_{2,2}$) to use 50\% of renewable material to make its products, e.g., to use 50\% bio-plastic and 50\% synthetic plastic. In this case, a mass of $m/2$ exits the nonrenewable reservoir, while a mass of $m$ enters the landfill (the whole product); thus,  $m_{\text{u},\text{b}} = \frac{m}{2} + m$ and the circularity becomes
\begin{equation}
\lambda_{\text{r}_1} = - 1.5m.
\end{equation}
Note that $\lambda_{\text{r}_1} > \lambda_{\text{l}}$, thus the reduction improves circularity. Another case of reduction occurs by reducing the amount of material required to make a product. In this second case of reduction, the manufacturer may need only 80\% of the material $\beta$ required in Example \ref{sub:Example1}; this yields $m_{\text{u},\text{b}} = 0.8m + 0.8m = 1.6m$, thus the circularity with reduction becomes   
\begin{equation}
\lambda_{\text{r}_2} = - 1.6m.
\end{equation}
Note that $\lambda_{\text{r}_1} > \lambda_{\text{r}_2} > \lambda_{\text{l}}$.

\subsection{Example 3: Case of Robotic Repair}
Consider the arcs and the nodes in black and in blue in Fig. \ref{fig:Example1} with the exception of $c^7_{3,4}$. This yields a configuration involving robotic repair and second use ($c^8_{8,8}$), namely, $\mathcal{N}_{\text{p}}$:
\begin{equation}
\mathcal{N}_{\text{p}} = \{c^1_{1,1}, c^2_{2,2}, c^3_{3,3}, c^4_{4,4},c^8_{8,8},c^5_{1,2},c^6_{2,3},c^9_{3,8},c^{10}_{8,4}\}.
\end{equation}
The dynamics of $\beta$ in the case of $\mathcal{N}_{\text{p}}$ can be deduced from the modeling of $\mathcal{N}_{\text{l}}$ in Example \ref{sub:Example1}, and hence, it is omitted due to space constraints. Let us focus the attention on how the circularity is affected by the robotic repair and the second user, i.e., the node $c^8_{8,8}$ (refer to this recent work of the authors \cite{zocco2024unification} for further reading on ``circular robotics''). The circularity in the case of repair is indicated with $\lambda_{\text{p}}$. 

Equation (\ref{eq:circularity}) yields that the circularity with the repair stage is equivalent to the linear case in Example 1 because the mass of material $m$, eventually, reaches the landfill ($c^4_{4,4}$). Specifically, it gives that $\lambda_{\text{p}} = \lambda_{\text{l}} = -2m$. However, in Example 1, the material enters the landfill at $t_{5_\text{l}}$ after that the garbage truck has collected the used product in $c^3_{3,3}$ at $t_4 < t_{5_\text{l}}$. In contrast, in $\mathcal{N}_{\text{p}}$, at $t_4$, the material leaves $c^3_{3,3}$ to be repaired, and then, reused. Thus, it holds that $t_4 < t_{5_\text{l}} << t_{5_\text{p}}$, where $t_{5_\text{p}}$ is the time at which the material enters the landfill in the case of repair and reuse (i.e., in $\mathcal{N}_{\text{p}}$). This phenomenon shows that circularity has a \emph{memory}, that is, it must be defined with respect to a \emph{time horizon}, which we call $\phi$. To embed the memory into the notion of circularity (\ref{eq:circularity}), we use the notation $\lambda_{\phi,\text{l}}$ and $\lambda_{\phi,\text{p}}$ for the linear and the repair case, respectively; comparisons must be done for the same memory $\phi$ to be meaningful. 

Calculating the circularity with respect to $\phi = t_{5_\text{l}}$ yields $\lambda_{\phi,\text{l}}|_{\phi = t_{5_\text{l}}} = -2m$ and $\lambda_{\phi,\text{p}}|_{\phi = t_{5_\text{l}}} = -m$ for the linear and the repair case, respectively, which highlights the benefit of repair. In contrast, if $\phi = t_{5_\text{p}}$, we have that $\lambda_{\phi,\text{l}}|_{\phi = t_{5_\text{p}}} = \lambda_{\phi,\text{p}}|_{\phi = t_{5_\text{p}}} = -2m$, meaning that the linear and the repair cases have the same circularity for $t > t_{5_\text{p}}$. With the configuration $\mathcal{N}_{\text{p}}$, the mass of material $m$ is kept in use for a longer time, and specifically, for the time $\Delta_{\text{e}} = t_{5_\text{p}} - t_{5_\text{l}}$. Extending the life cycle of materials is a key principle of CE \cite{EllenMacArthurFound} and it is captured by $\lambda(\mathcal{N})$ with memory, namely, $\lambda_{\phi}(\mathcal{N})$.

\section{Conclusion}
This paper proposes a physics-based, quantitative approach for assessing the circularity of a process or a system. In particular, a quantitative measure of circularity is proposed and its evaluation in three representative examples is shown. The proposed preliminary definition can be used to assess the circularity of a system and optimize its structure to minimize unsustainable mass transfers. Further studies on this topic aim to provide a non-dimensional, weighted definition of circularity and unsustainability, suitable for performing direct comparisons between different systems and plants, that, besides mass transfer, considers material typology in terms of availability and risks. Thanks to the physics-based process model, other parameters related to system dynamics can be evaluated and integrated into the circularity evaluation. Future investigations on this topic will focus on the integration of circularity assessment to take into account energetic efficiency, time constraints, and environmental impact.

\bibliographystyle{splncs04}
\bibliography{references}

\begin{thebibliography}{10}
\providecommand{\url}[1]{\texttt{#1}}
\providecommand{\urlprefix}{URL }
\providecommand{\doi}[1]{https://doi.org/#1}

\bibitem{basile2024measuring}
Basile, V., Petacca, N., Vona, R.: Measuring circularity in life cycle
  management: A literature review. Global Journal of Flexible Systems
  Management  \textbf{25}(3),  419--443 (2024)

\bibitem{bondy1976graph}
Bondy, J.A., Murty, U.S.R.: Graph theory with applications, vol.~290. Macmillan
  London (1976)

\bibitem{cullen2022material}
Cullen, J.M., Cooper, D.R.: Material flows and efficiency. Annual Review of
  Materials Research  \textbf{52}(1),  525--559 (2022)

\bibitem{EllenMacArthurFound}
Ellen\hspace{3pt}MacArthur\hspace{3pt}Foundation: What is a circular economy?,
  \, webpage:
  \url{https://www.ellenmacarthurfoundation.org/topics/circular-economy-introduction/overview};
  last access: 16 October 2024

\bibitem{EU-waste}
European\hspace{3pt}Commission: Waste and recycling, \, webpage:
  \url{https://environment.ec.europa.eu/topics/waste-and-recycling_en}; last
  access: 16 October 2024

\bibitem{CRM-EU}
European\hspace{3pt}Commission: Critical raw materials (2023), \, webpage:
  \url{https://single-market-economy.ec.europa.eu/sectors/raw-materials/areas-specific-interest/critical-raw-materials\_en};
  last access: 16 October 2024

\bibitem{haddad2017thermodynamics}
Haddad, W.M.: Thermodynamics: The unique universal science. Entropy
  \textbf{19}(11), ~621 (2017)

\bibitem{haddad2019dynamical}
Haddad, W.M.: A dynamical systems theory of thermodynamics. Princeton
  University Press (2019)

\bibitem{moran2010fundamentals}
Moran, M.J., Shapiro, H.N., Boettner, D.D., Bailey, M.B.: Fundamentals of
  engineering thermodynamics. John Wiley \& Sons (2010)

\bibitem{potting2017circular}
Potting, J., Hekkert, M.P., Worrell, E., Hanemaaijer, A.: Circular economy:
  {Measuring} innovation in the product chain. Planbureau voor de Leefomgeving
  (issue 2544, report) (2017)

\bibitem{wang2024bayesian}
Wang, J., Ray, K., Brito-Parada, P., Plancherel, Y., Bide, T., Mankelow, J.,
  Morley, J., Stegemann, J.A., Myers, R.: Bayesian material flow analysis for
  systems with multiple levels of disaggregation and high dimensional data.
  Journal of Industrial Ecology  (2024)

\bibitem{zocco2022circularity}
Zocco, F.: Circularity of thermodynamical material networks: Indicators,
  examples, and algorithms. arXiv preprint arXiv:2209.15051  (2024)

\bibitem{zocco2024unification}
Zocco, F., Haddad, W.M., Corti, A., Malvezzi, M.: A unification between
  deep-learning vision, compartmental dynamical thermodynamics, and robotic
  manipulation for a circular economy. IEEE Access pp.~1--1 (2024).
  \doi{10.1109/ACCESS.2024.3493755}

\bibitem{zocco2023thermodynamical}
Zocco, F., Sopasakis, P., Smyth, B., Haddad, W.M.: Thermodynamical material
  networks for modeling, planning, and control of circular material flows.
  International Journal of Sustainable Engineering  \textbf{16}(1),  1--14
  (2023)

\end{thebibliography}

\end{document}